\newtheorem{theorem}{Theorem}[section]
\newtheorem{lemma}[theorem]{Lemma}
\newtheorem{proposition}[theorem]{Proposition}
\theoremstyle{definition}
\newtheorem{example}[theorem]{Example}
\begin{document}

\huge

\begin{center}
A probabilistic journey through the Newton-Girard identities
\end{center}

\vspace{0.5cm}

\large

\begin{center}
Jean-Christophe Pain$^{a,b,}$\footnote{jean-christophe.pain@cea.fr}
\end{center}

\normalsize

\begin{center}
\it $^a$CEA, DAM, DIF, F-91297 Arpajon, France\\
\it $^b$Universit\'e Paris-Saclay, CEA, Laboratoire Mati\`ere en Conditions Extr\^emes,\\
\it 91680 Bruy\`eres-le-Ch\^atel, France\\
\end{center}

\begin{abstract}
This article presents a pedagogical probabilistic exploration of the Newton-Girard identities. We show that the coefficients in these classical relations between power sums and elementary symmetric polynomials can be interpreted as the stable limits of integrals over the unit cube, and as ratios of moments of simple probability distributions. Several classes of integrals are studied, including trigonometric and multiplicative forms. In addition, we discuss the spectral implications via the Le Verrier-Souriau-Faddeev algorithm and Random Matrix Theory, providing a unified framework for the asymptotic algebraic behavior of these identities. While the identities are classical, the probabilistic interpretation of the limits of their normalized forms is the specific focus of the present work.
\end{abstract}

\section{Introduction}\label{sec1}

The so-called Newton-Girard identities establish a fundamental relationship between the power sums of a set of variables and their elementary symmetric polynomials \cite{macdonald,stanley}. These relations are central to algebra, combinatorics, and the theory of symmetric functions, and they also naturally arise in applications ranging from polynomial theory to random matrix models \cite{Horn1990,mehta}. In this work, we develop a probabilistic and integral perspective on the Newton-Girard identities \cite{feller,Apostol1974}. Specifically, we show that certain constants appearing in these formulas can be interpreted as the limits of high-dimensional integrals and as ratios of moments of simple probability distributions.  

Let $(X_i)_{i\ge 1}$ denote a sequence of independent random variables uniformly distributed on $(0,1)$. For large $n$, the power sums
\[
S_j^{(n)} = \sum_{i=1}^n X_i^j
\]
and the elementary symmetric polynomials
\[
\sigma_k^{(n)} = \sigma_k(X_1, \dots, X_n)
\]
admit simple asymptotic behavior after suitable normalization \cite{hoeffding}. In particular, the ratios $S_j^{(n)}/S_1^{(n)}$ converge almost surely to $2/(j+1)$, reflecting the ratio of moments of the uniform distribution $\mathcal{U}$, while the normalized symmetric polynomials 
$$
\displaystyle\frac{\sigma_k^{(n)}}{(S_1^{(n)})^k}
$$ 
converge almost surely to $1/k!$. 

This perspective allows one to view the Newton-Girard identities as a bridge between two independent structures: the combinatorial structure of the symmetric polynomials and the probabilistic structure of the power sums. In this sense, the factorials and the constants $2^j/(j+1)$ arise naturally and can be understood analytically.

The present article unfolds in several stages centered on the study of Newton-Girard identities and their applications. Section \ref{sec2} introduces a probabilistic model and establishes integral limit theorems that form the basis for the subsequent developments. Section \ref{sec3} focuses on the asymptotic analysis of elementary symmetric polynomials, setting the stage for the normalized formulation of the identities, presented in section \ref{sec4} along with their limiting form. Section \ref{sec5} explores matrix functions within the Schwerdtfeger framework, providing a powerful tool for the matrix analysis of the identities. Section \ref{sec6} illustrates these concepts through the practical application of the Le Verrier-Souriau-Faddeev algorithm, while section \ref{sec7} provides the necessary analytic extensions and convergence proofs that ensure the rigor of the obtained results. This organization allows the reader to follow a progressive path, from probabilistic foundations to concrete applications and analytical justifications.

\section{A probabilistic model and integral limit theorems}\label{sec2}

Consider a sequence $(X_i)_{i\ge 1}$ of independent and identically distributed random variables with uniform distribution on $(0,1)$ \cite{feller}. For $\alpha>0$ and $n\ge 1$, define the power sums
\[
S_\alpha^{(n)} = \sum_{i=1}^n X_i^\alpha, \qquad S_1^{(n)} = \sum_{i=1}^n X_i.
\]
By the strong law of large numbers, we know that the sample averages converge almost surely to their expectations \cite{feller}:
\[
\frac{S_\alpha^{(n)}}{n} \longrightarrow \mathbb{E}[X^\alpha] = \frac{1}{\alpha+1},
\]
and
\[
\frac{S_1^{(n)}}{n} \longrightarrow \mathbb{E}[X] = \frac{1}{2}.
\]
From this, one immediately obtains that
\[
\displaystyle\frac{S_\alpha^{(n)}}{S_1^{(n)}} = \displaystyle\frac{\displaystyle\frac{1}{n} S_\alpha^{(n)}}{\displaystyle\frac{1}{n} S_1^{(n)}} \longrightarrow \frac{2}{\alpha+1} \quad \text{almost surely.}
\]

\begin{theorem}[Integral limit theorem]
Let $\alpha \ge 1$. Then
\[
\lim_{n \to \infty} 
\int_{(0,1)^n} \frac{x_1^\alpha + \cdots + x_n^\alpha}{x_1 + \cdots + x_n} \, \mathrm{d}x_1 \cdots \mathrm{d}x_n
= \frac{2}{\alpha+1}.
\]
\end{theorem}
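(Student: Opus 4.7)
The plan is to reinterpret the integral probabilistically and then combine the almost-sure convergence statement already established in the preamble with a bounded convergence argument. Writing $(X_i)_{i\ge 1}$ for i.i.d.\ random variables uniform on $(0,1)$, the integral in the statement is precisely
\[
\mathbb{E}\!\left[\frac{S_\alpha^{(n)}}{S_1^{(n)}}\right],
\]
since $S_1^{(n)}>0$ almost surely. Thus the claim amounts to showing that this expectation converges to $2/(\alpha+1)$.

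First, I would invoke the pointwise result already proved in the preceding discussion: by the strong law of large numbers applied separately to the numerator and denominator, $S_\alpha^{(n)}/S_1^{(n)} \to 2/(\alpha+1)$ almost surely as $n\to\infty$. This handles the pointwise (almost-sure) side of the convergence.

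Next, to justify interchanging the limit with the expectation, I would supply a uniform bound. Since $\alpha\ge 1$ and each $X_i\in(0,1)$, we have $X_i^\alpha\le X_i$ for every $i$, and summing gives $S_\alpha^{(n)}\le S_1^{(n)}$, hence
\[
0 \le \frac{S_\alpha^{(n)}}{S_1^{(n)}} \le 1
\]
for every $n$ and on the whole cube. The bounded convergence theorem then immediately yields
\[
\lim_{n\to\infty}\mathbb{E}\!\left[\frac{S_\alpha^{(n)}}{S_1^{(n)}}\right] = \frac{2}{\alpha+1},
\]
which is the stated identity once the expectation is rewritten as an integral over $(0,1)^n$.

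The argument is essentially routine once the probabilistic reformulation is in place; there is no real obstacle. The only delicate point is the hypothesis $\alpha\ge 1$, which is exactly what is needed to obtain the uniform bound $S_\alpha^{(n)}\le S_1^{(n)}$ on the cube. For $0<\alpha<1$ the ratio is no longer bounded by $1$, and one would need a different dominating function (or an $L^1$-type control) to carry through the interchange of limit and integral; under the stated assumption, however, bounded convergence suffices.
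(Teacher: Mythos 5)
Your argument is correct and is essentially identical to the paper's own proof: both reinterpret the integral as $\mathbb{E}[S_\alpha^{(n)}/S_1^{(n)}]$, use the strong law of large numbers for the almost-sure limit $2/(\alpha+1)$, and invoke the uniform bound $0\le S_\alpha^{(n)}/S_1^{(n)}\le 1$ (valid since $x^\alpha\le x$ on $(0,1)$ for $\alpha\ge 1$) to justify the interchange of limit and expectation by dominated convergence. Your closing remark about the role of the hypothesis $\alpha\ge 1$ is a sensible observation that the paper does not make explicit.
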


\begin{proof}
To understand why this limit holds, observe first that for $x \in (0,1)$ and $\alpha \ge 1$, we have $0 \le x^\alpha \le x$. Consequently, the ratio $S_\alpha^{(n)}/S_1^{(n)}$ is always bounded between $0$ and $1$, independently of $n$. The strong law of large numbers ensures that almost surely, as $n \to \infty$, this ratio converges to the deterministic limit $2/(\alpha+1)$ \cite{feller}. Since the sequence is bounded, we can apply the dominated convergence theorem, which allows us to interchange the limit and the expectation (or equivalently, the integral over the $n$-dimensional unit cube). This gives precisely the stated result. More details and intermediate steps are given in Appendix \ref{appA}. The convergence properties of such $n$-fold integrals was investigated in details in Ref. \cite{Stoyanov1986}, in which a very large number of different and interesting integrals are studied, which extend well beyond the current wort. As pointed out by the author, the latter work was inspired by an old problem published by Uspensky in the American Mathematical Monthly.
\end{proof}

In words, the constant $2/(\alpha+1)$, which will appear in the normalized Newton-Girard identities, arises naturally as the limit of a high-dimensional integral and reflects the ratio of the moments $\mathbb{E}[X^\alpha]/\mathbb{E}[X]$.

\section{Asymptotics of elementary symmetric polynomials}\label{sec3}

\begin{theorem}[Normalized asymptotics]
For any fixed integer $k \ge 1$, one has almost surely
\[
\frac{\sigma_k^{(n)}}{(S_1^{(n)})^k} \longrightarrow \frac{1}{k!}, \quad n \to \infty,
\]
where
\[
\sigma_k^{(n)} = \sum_{1 \le i_1 < \dots < i_k \le n} X_{i_1} \cdots X_{i_k}.
\]
represents the $k$-th elementary symmetric polynomial.

\end{theorem}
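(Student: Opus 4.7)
The plan is to expand $(S_1^{(n)})^k$ by the multinomial theorem and identify $k!\,\sigma_k^{(n)}$ as the dominant term. Writing
\[
(S_1^{(n)})^k = \sum_{(i_1,\ldots,i_k)\in\{1,\ldots,n\}^k} X_{i_1}\cdots X_{i_k},
\]
the sub-sum over tuples with pairwise distinct coordinates equals exactly $k!\,\sigma_k^{(n)}$, because every unordered $k$-subset of $\{1,\ldots,n\}$ appears under $k!$ orderings. The remaining tuples can be grouped according to the set partition of $\{1,\ldots,k\}$ induced by their coordinates; a partition into $r<k$ blocks of sizes $a_1,\ldots,a_r$ (with $\sum a_j = k$ and at least one $a_j \ge 2$) contributes a sum bounded by $S_{a_1}^{(n)}\cdots S_{a_r}^{(n)}$, which is $O(n^r)$ almost surely by the strong law of large numbers applied to each factor (as already used in Section~\ref{sec2}).

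Consequently $(S_1^{(n)})^k = k!\,\sigma_k^{(n)} + R_k^{(n)}$ with $R_k^{(n)} = O(n^{k-1})$ almost surely. Since $S_1^{(n)}/n \to 1/2$ almost surely, we have $(S_1^{(n)})^k \sim (n/2)^k$, so dividing the identity above by $(S_1^{(n)})^k$ produces
\[
\frac{k!\,\sigma_k^{(n)}}{(S_1^{(n)})^k} = 1 - \frac{R_k^{(n)}}{(S_1^{(n)})^k} = 1 + O(n^{-1}) \quad \text{almost surely},
\]
which gives the announced limit $1/k!$ upon dividing by $k!$.

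The step requiring the most care is the bookkeeping of the residual $R_k^{(n)}$, namely verifying that each set-partition contribution is genuinely of order strictly less than $n^k$; once this combinatorial accounting is laid out, the estimate follows by applying the SLLN factor by factor. An equivalent and perhaps more conceptual route, closer in spirit to the probabilistic theme of the paper, is to recognize $\sigma_k^{(n)}/\binom{n}{k}$ as a U-statistic of degree $k$ with the bounded symmetric kernel $h(x_1,\ldots,x_k)=x_1\cdots x_k$ and to invoke Hoeffding's strong law of large numbers for U-statistics \cite{hoeffding}, yielding $\sigma_k^{(n)}/\binom{n}{k} \to \mathbb{E}[X_1\cdots X_k] = 2^{-k}$ almost surely; combined with $\binom{n}{k}/n^k \to 1/k!$ and $(S_1^{(n)}/n)^k \to 2^{-k}$, this again delivers the ratio $1/k!$.
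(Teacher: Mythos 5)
Your argument is correct, and your primary route is genuinely different from the paper's. The paper works directly with the numerator: it treats $\sigma_k^{(n)}/\binom{n}{k}$ as a degree-$k$ U-statistic with kernel $x_1\cdots x_k$, invokes Hoeffding's strong law for U-statistics to get $\sigma_k^{(n)}\sim\binom{n}{k}2^{-k}$, and divides by $(S_1^{(n)})^k\sim(n/2)^k$ — exactly the alternative you sketch in your closing paragraph. Your main route instead expands $(S_1^{(n)})^k$ by the multinomial theorem, isolates $k!\,\sigma_k^{(n)}$ as the distinct-index part, and controls the residual by sorting the remaining tuples according to the induced set partition of $\{1,\dots,k\}$: each partition into $r\le k-1$ blocks of sizes $a_1,\dots,a_r$ contributes at most $S_{a_1}^{(n)}\cdots S_{a_r}^{(n)}=O(n^{r})$ almost surely, and there are only finitely many partitions, so $R_k^{(n)}=O(n^{k-1})$ while $(S_1^{(n)})^k=\Theta(n^k)$. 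This bookkeeping is sound. What your approach buys is elementarity and a sharper structural statement: it needs only the ordinary strong law applied to the i.i.d.\ sequences $(X_i^{a})_i$ for $a\le k$, rather than the U-statistic law of large numbers, and it shows directly that $k!\,\sigma_k^{(n)}/(S_1^{(n)})^k\to 1$ for essentially any positive bounded distribution, without ever computing the moment $2^{-k}$ (which cancels in the paper's version anyway). What the paper's route buys is brevity and a cleaner fit with the probabilistic theme of the article, where the $2^{-k}$ and the $\binom{n}{k}/n^k\to 1/k!$ factors are displayed separately as the ``moment'' and ``combinatorial'' contributions. Both proofs are valid; yours is, if anything, the more self-contained of the two.
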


\begin{proof}
To see this, note that $\sigma_k^{(n)}$ is the sum of $\binom{n}{k}$ terms, each a product of $k$ independent uniform variables \cite{hoeffding}. Each term is strictly less than $1$. By the law of large numbers for $\mathcal{U}$-statistics, the average of these $\binom{n}{k}$ terms converges almost surely to 
$$
\mathbb{E}[X_1 \cdots X_k] = \left(\frac{1}{2}\right)^k.
$$
Therefore, for large $n$, we have
\[
\sigma_k^{(n)} \sim \binom{n}{k} \left(\frac{1}{2}\right)^k.
\]
Meanwhile, the first power sum satisfies 
$$
(S_1^{(n)})^k \sim \left(\frac{n}{2}\right)^k
$$ 
almost surely. Taking the ratio of the two previous quantities yields
\[
\frac{\sigma_k^{(n)}}{(S_1^{(n)})^k} \sim \frac{\displaystyle\binom{n}{k}}{n^k}.
\]
Finally, the standard asymptotic expansion 
$$
\binom{n}{k} \sim \frac{n^k}{k!}
$$ 
as $n \to \infty$ immediately gives the desired result. This explains why the factorials naturally appear: they measure the proportion of $k$-tuples of distinct indices among all possible $k$-tuples.
\end{proof}

\section{Normalized Newton-Girard identities and their limiting form}\label{sec4}

The classical Newton-Girard identities read
\[
k \, \sigma_k^{(n)} = \sum_{j=1}^k (-1)^{j-1} \sigma_{k-j}^{(n)} S_j^{(n)}.
\]
Dividing both sides by $(S_1^{(n)})^k$ leads to the normalized form
\[
k \frac{\sigma_k^{(n)}}{(S_1^{(n)})^k} = \sum_{j=1}^k (-1)^{j-1} \frac{\sigma_{k-j}^{(n)}}{(S_1^{(n)})^{k-j}} \cdot \frac{S_j^{(n)}}{(S_1^{(n)})^j}.
\]

\begin{proposition}[Limiting identity]
For any fixed integer $k \ge 1$, one obtains
\[
\frac{1}{(k-1)!} = \sum_{j=1}^k (-1)^{j-1} \frac{2^j}{(k-j)!(j+1)}.
\]
\end{proposition}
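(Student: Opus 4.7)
The plan is to pass to the almost-sure limit $n \to \infty$ in the normalized Newton-Girard identity displayed just before the statement, applying termwise the convergence results of the preceding two sections.

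First I would invoke Theorem~3.1, once with $k$ and once with $k-j$ in place of $k$, to obtain $\sigma_k^{(n)}/(S_1^{(n)})^k \to 1/k!$ and $\sigma_{k-j}^{(n)}/(S_1^{(n)})^{k-j} \to 1/(k-j)!$ almost surely (with the convention $\sigma_0^{(n)} = 1$, so that $1/0!=1$ when $j=k$). Multiplying through by $k$ identifies the left-hand side with $k/k! = 1/(k-1)!$ in the limit. Next, for the moment factor $S_j^{(n)}/(S_1^{(n)})^j$ in the $j$-th summand, I would appeal to Theorem~2.1 together with the normalized moment ratio $\mathbb{E}[X^j]/\mathbb{E}[X]^j = (1/(j+1))/(1/2)^j = 2^j/(j+1)$ for $X \sim \mathcal{U}(0,1)$, interpreted as the limiting value of the ratio once the sample-size scaling has been absorbed into the denominator.

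Combining these termwise, the $j$-th summand on the right converges to $(-1)^{j-1} \cdot \frac{1}{(k-j)!} \cdot \frac{2^j}{j+1}$. Since the sum contains only $k$ terms, the limit of the sum equals the sum of the limits, and equating the two limits produces the claimed deterministic identity.

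The hard part will be to make rigorous the interpretation of the ratio $S_j^{(n)}/(S_1^{(n)})^j$: rewriting it as $(S_j^{(n)}/n)/(S_1^{(n)}/n)^j$ divided by an appropriate power of $n$, it is the moment ratio $\mathbb{E}[X^j]/\mathbb{E}[X]^j$ that supplies the factor $2^j$ (rather than a single $2$) in the final identity, and I would need to track this bookkeeping carefully to avoid confusing the genuine limit of $S_j^{(n)}/S_1^{(n)}$ with the normalized moment ratio that actually appears on the right-hand side. A continuous-mapping or bounded-convergence argument of the type used in the proof of Theorem~2.1, together with the independence of the limits across the factorial and moment factors, should then be enough to push the limit inside each summand and complete the termwise identification.
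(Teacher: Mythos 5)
Your plan follows the paper's own route---pass to the almost-sure limit termwise in the normalized Newton--Girard identity---but the step you yourself single out as ``the hard part'' is exactly where the argument breaks, and it cannot be repaired by bookkeeping. The quantity that actually appears in the normalized identity is $S_j^{(n)}/(S_1^{(n)})^j$, and its limit is \emph{not} the moment ratio $\mathbb{E}[X^j]/\mathbb{E}[X]^j = 2^j/(j+1)$. Writing
\[
\frac{S_j^{(n)}}{(S_1^{(n)})^j}
= n^{1-j}\,\frac{S_j^{(n)}/n}{\bigl(S_1^{(n)}/n\bigr)^{j}},
\]
the second factor does converge almost surely to $2^j/(j+1)$, but the prefactor $n^{1-j}$ tends to $0$ for every $j\ge 2$, and there is no remaining unnormalized quantity into which this power of $n$ can be ``absorbed'': both sides of the identity have already been divided by $(S_1^{(n)})^k$. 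Hence every summand with $j\ge 2$ vanishes in the limit, and the termwise limit of the right-hand side reduces to the $j=1$ term alone, namely $\frac{1}{(k-1)!}\cdot\frac{2}{2}=\frac{1}{(k-1)!}$. The limiting relation your argument actually produces is the tautology $\frac{1}{(k-1)!}=\frac{1}{(k-1)!}$, not the displayed formula.

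Indeed, the displayed identity is false for $k\ge 2$: at $k=2$ it asserts $1=\frac{2}{1!\cdot 2}-\frac{4}{0!\cdot 3}=1-\frac{4}{3}=-\frac{1}{3}$. So no proof strategy can establish it as stated; to retain a nontrivial contribution from the $j$-th summand one would have to rescale it by $n^{j-1}$, which destroys the clean termwise-limit structure you (and the paper) rely on. For what it is worth, the paper's own proof asserts the same incorrect limit $S_j^{(n)}/(S_1^{(n)})^j\to 2^j/(j+1)$ and has the identical gap; your instinct that the normalization needed careful tracking was correct---the tracking just shows the approach cannot work.
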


\begin{proof}
From the previous sections, we know that
\[
\frac{\sigma_{k-j}^{(n)}}{(S_1^{(n)})^{k-j}} \longrightarrow \frac{1}{(k-j)!}, \qquad
\frac{S_j^{(n)}}{(S_1^{(n)})^j} \longrightarrow \frac{2^j}{j+1}.
\]

All terms are bounded, so we may pass to the limit termwise in the normalized Newton-Girard identities \cite{feller}. This yields the stated combinatorial-analytic formula. Conceptually, this identity captures the asymptotic balance between the combinatorial structure of symmetric polynomials and the probabilistic structure of the power sums.
\end{proof}

\section{Matrix functions and the Schwerdtfeger framework}\label{sec5}

This section presents the general framework for matrix functions emphasizing the role of eigenvalues, Frobenius covariants, and spectral decompositions, leading naturally to algorithmic constructions such as the Le Verrier-Souriau-Faddeev method.

\subsection{Frobenius covariants and spectral decomposition}\label{subsec51}

Let \(A \in M_n(\mathbb{C})\) with distinct eigenvalues \(\lambda_1,\dots,\lambda_\mu\). For each \(k=1,\dots,\mu\), denote by
\(m_k\) the multiplicity of \(\lambda_k\) in the minimal polynomial of \(A\). The Frobenius covariants are defined by
\[
F_k(A) = \prod_{\substack{j=1\\ j\neq k}}^{\mu} \frac{A-\lambda_j I}{\lambda_k-\lambda_j}.
\]
These matrices generalize the concept of spectral projectors: when \(A\) is diagonalizable, \(F_k(A)\) projects onto the eigenspace associated with \(\lambda_k\). More generally, they isolate the contribution of each eigenvalue, even in the presence of nontrivial Jordan blocks. The covariants satisfy
\[
F_k(A) F_\ell(A) = \delta_{k\ell} F_k(A), \qquad
\sum_{k=1}^{\mu} F_k(A) = I,
\]
so they provide a mutually commuting decomposition of the identity that organizes the spectral information of \(A\).

\subsection{The Schwerdtfeger formula}\label{subsec52}

For any analytic function \(f\) defined on an open set containing \(\sigma(A)\), the Schwerdtfeger formula expresses \(f(A)\) \cite{schwerdtfeger, higham, serre}:
\begin{equation}
\label{eq:Schwerdtfeger-main}
f(A) = \sum_{k=1}^{\mu} \sum_{j=0}^{m_k-1} \frac{f^{(j)}(\lambda_k)}{j!} (A-\lambda_k I)^j F_k(A).
\end{equation}
This formula provides a full spectral expansion of \(f(A)\), encoding both the eigenvalues and the nilpotent contributions from Jordan blocks. If \(A\) is diagonalizable (\(m_k=1\) for all \(k\)), Eq. \eqref{eq:Schwerdtfeger-main} reduces to the classical Lagrange–Sylvester interpolation formula \cite{higham}:
\[
f(A) = \sum_{k=1}^{\mu} f(\lambda_k) F_k(A).
\]

\subsection{From spectral theory to characteristic polynomials}\label{subsec53}

Consider the characteristic polynomial \(\chi_A(X)=\det(XI-A)\). The Schwerdtfeger framework immediately suggests a systematic approach to computing \(\chi_A\) using traces of powers of \(A\) and Frobenius covariants. Specifically, the Newton–Girard identities relate the coefficients of \(\chi_A\) to the power sums \(\mathrm{Tr}(A^k)\). This observation underlies the Le Verrier-Souriau-Faddeev algorithm (see section \ref{sec6} and Appendix~\ref{appC}), which computes the characteristic polynomial iteratively by constructing a sequence of matrices. Frobenius covariants \(F_k(A)\) serve as spectral selectors, extracting the contribution of each eigenvalue, powers \((A-\lambda_k I)^j\) encode the nilpotent part from Jordan blocks, and the Schwerdtfeger formula gives a unified expression for \(f(A)\) including all these contributions. Algorithmic procedures, such as Le Verrier-Souriau-Faddeev, are computational incarnations of this spectral framework.

Together, these constructions form a bridge between abstract spectral theory and practical computation of characteristic polynomials and
matrix functions, as shown by the detailed presentations in Appendices~\ref{appB} and~\ref{appC}.

\section{Application: Le Verrier-Souriau-Faddeev algorithm}\label{sec6}

The Newton-Girard identities, supplemented by the spectral insights of the Schwerdtfeger representation, provide the foundation for the Le Verrier-Souriau-Faddeev algorithm \cite{LeVerrier1840,Souriau1948,Faddeev1963,Householder1964,Faddeev1972,chender}. This algorithm allows for the recursive computation of the characteristic polynomial from the traces of powers. Let \(A\in M_n(\mathbb{C})\) be a matrix with eigenvalues \(\lambda_1,\dots,\lambda_n\). Let us also define the power sums
\[
S_k^{(n)} = \mathrm{Tr}(A^k) = \sum_{i=1}^n \lambda_i^k,
\]
where \(\sigma_k\) denote the coefficients of the characteristic polynomial
\[
\chi_A(X) = X^n - \sigma_1 X^{n-1} + \sigma_2 X^{n-2} - \cdots + (-1)^n \sigma_n.
\]
Then the Newton-Girard identities read
\[
\mathrm{Tr}(A^m) = \sigma_1 \mathrm{Tr}(A^{m-1}) - \sigma_2 \mathrm{Tr}(A^{m-2}) + \cdots + (-1)^{m-1} m \sigma_m,
\qquad 1\le m\le n,
\]
which is exactly the backbone of the Le Verrier-Souriau-Faddeev algorithm \cite{LeVerrier1840,Souriau1948,Faddeev1963,Householder1964,Faddeev1972,chender} to compute \(\chi_A(X)\) recursively from traces of powers of \(A\) (see Appendix~\ref{appC}).

\begin{example}
As an example, let us take $A = \begin{pmatrix} 3 & 1 \\ 2 & 4 \end{pmatrix}$. Then we have
\[
S_1 = \mathrm{Tr}(A) = 7, \quad S_2 = \mathrm{Tr}(A^2) = 3^2+1\cdot2 + 2\cdot1 +4^2 = 29.
\]
Using the Newton-Girard identity for $m=2$ implies
\[
S_2 = \sigma_1 S_1 - 2\sigma_2 \implies 29 = 7\cdot7 - 2\sigma_2 \implies \sigma_2 = 10,
\]
yielding the characteristic polynomial $\chi_A(X) = X^2 - 7X + 10$.
\end{example}

Random matrices extend this idea. Let
\[
A_n = \operatorname{diag}(X_1,\dots,X_n),
\]
where the random variables \(X_i\) are independent and identically distributed with
\[
X_i \sim \mathcal{U}(0,1).
\]
In that case, the normalized traces satisfy
\[
\frac{\mathrm{Tr}(A_n^m)}{\mathrm{Tr}(A_n)} \approx \frac{2}{m+1} \quad \text{as } n \to \infty,
\]
illustrating that high-dimensional averaging principles govern the probabilistic origin of the Newton-Girard identities. This shows that the probabilistic integral approach, which led naturally to the identities, also underlies classical trace formulas for matrices. In particular, the coefficients \(\sigma_k\) of the characteristic polynomial are determined recursively by the traces \(\mathrm{Tr}(A^j)\), $j\le k$, mirroring the structure of the integral constraints.

\section{Analytic extensions and convergence proofs}\label{sec7}

In this section, we provide detailed explanations and rigorous proofs for generalized integral identities. These results extend the probabilistic interpretation of Newton-Girard identities and multiplicative structures, which are prevalent in the study of log-gases and partition functions. They illustrate how the constants appearing in normalized identities naturally arise as limits of high-dimensional integrals. 

\subsection{Motivation and overview}\label{subsec71}

In section \ref{sec4}, we interpreted the constants $2/(j+1)$ and $1/k!$ appearing in normalized Newton-Girard identities as asymptotic limits of ratios of sums and products of independent uniform random variables. The present section generalizes this approach to integrals with trigonometric denominators, integrals involving multiplicative (geometric mean-like) numerators, arbitrary ratios of powers and general probability distributions.

The guiding principle is always the law of large numbers and the dominated convergence theorem, which allow us to exchange limits and integrals in high dimensions.

\subsection{Convergence of trigonometric ratio and Selberg integrals}\label{subsec72}

We consider the asymptotic behavior of the integral $I_n$ defined by the ratio of a power-sum numerator and a trigonometric denominator over the unit cube:
\[
I_n(a) = \int_{(0,1)^n} \frac{\sum_{i=1}^n x_i^a}{\sum_{i=1}^n \sin(\pi x_i)} \, \mathrm{d}x_1 \cdots \mathrm{d}x_n,
\]
where $a \ge 0$ is fixed. These forms are related to the Selberg integral and its generalizations in the context of random matrix theory and statistical mechanics \cite{mehta,forrester}. The Selberg integral provides the exact normalization for multiplicative numerators.

\begin{equation*}
S_n(a, b, \lambda) = \int_0^1 \cdots \int_0^1 \prod_{i=1}^n x_i^{a-1} (1-x_i)^{b-1} \prod_{1 \le j < k \le n} |x_j - x_k|^{2\lambda} \mathrm{d}x_1 \cdots \mathrm{d}x_n.
\end{equation*}

The trigonometric and power-sum integrals are connected to the theory of log-gases, as detailed by Forrester \cite{forrester}. In the context of Random Matrix Theory, the joint probability density function of the eigenvalues for the circular (unitary and orthogonal) ensembles involves a product of chordal distances:
\begin{equation*}
P(x_1, \dots, x_n) = C_n \prod_{1 \le j < k \le n} \left| e^{i 2\pi x_j} - e^{i 2\pi x_k} \right|^\beta.
\end{equation*}
Our integral approach to the Newton-Girard identities corresponds to the $\beta \to 0$ limit (non-interacting case), where the $x_i$ become independent. 

We first reformulate this integral as an expectation. Let $X_1, \dots, X_n$ be independent and identically distributed uniform random variables on $(0,1)$. Then
\[
I_n(a) = \mathbb{E}\left[\frac{X_1^a + \cdots + X_n^a}{\sin(\pi X_1) + \cdots + \sin(\pi X_n)}\right].
\]
As a second step, we define the empirical averages, setting
\[
Y_n = \frac{1}{n} \sum_{i=1}^n X_i^a, \qquad Z_n = \frac{1}{n} \sum_{i=1}^n \sin(\pi X_i),
\]
and we then have $I_n(a) = \mathbb{E}[Y_n/Z_n]$.

By the strong law of large numbers, one has, almost surely:
\[
Y_n \to \mathbb{E}[X^a] = \frac{1}{a+1}, \qquad
Z_n \to \mathbb{E}[\sin(\pi X)] = \frac{2}{\pi}.
\]
Since 
$$
0 \le \frac{Y_n}{Z_n} \le \frac{\sum x_i^a}{\sum \sin(\pi x_i)} \le \pi/2,
$$ 
the sequence is uniformly bounded. According to the dominated-convergence theorem, we can write
\[
\lim_{n \to \infty} I_n(a) = \frac{\mathbb{E}[X^a]}{\mathbb{E}[\sin(\pi X)]} = \frac{\pi}{2(a+1)}.
\]
Indeed, for $a=j$, this constant resembles the $2/(j+1)$ factor in the normalized Newton-Girard identities, up to a factor of $\pi/2$ arising from the trigonometric scaling.

\subsection{Multiplicative numerators and geometric means}\label{subsec73}

Let us consider
\[
J_n = \int_{(0,1)^n} \frac{\prod_{i=1}^n x_i^{1/n}}{\frac{1}{n} \sum_{i=1}^n \sin^2(\pi x_i)} \, \mathrm{d}x_1 \cdots \mathrm{d}x_n.
\]
Taking the logarithms enables one to simplify the product. We can now apply the law of large numbers to the logarithm. We know that
$$
\mathbb{E}[\ln X_i] = -1,
$$
so one almost surely has
\[
\ln P_n \to -1 \implies P_n \to e^{-1}.
\]
Similarly, the convergence of the denominator gives  
\[
\frac{1}{n} \sum_{i=1}^n \sin^2(\pi X_i) \to \mathbb{E}[\sin^2(\pi X)] = \frac{1}{2}.
\]
The final step consists in combining numerator and denominator, and thus
\[
J_n \to \frac{e^{-1}}{1/2} = \frac{2}{e}.
\]
This example illustrates how multiplicative combinations of variables lead naturally to exponential constants like $e^{-1}$ in asymptotic ratios.

\subsection{Generalization to arbitrary power ratios}\label{subsec74}

\begin{theorem}
Let $\beta, \gamma > -1$ and define
\begin{equation*}
K_n(\beta, \gamma) = \int_{(0,1)^n} \frac{\sum_{i=1}^n x_i^\beta}{\sum_{i=1}^n x_i^\gamma} \, \mathrm{d}x_1 \cdots \mathrm{d}x_n.
\end{equation*}
Then
\begin{equation*}
\lim_{n\to\infty} K_n(\beta, \gamma) = \frac{\gamma+1}{\beta+1},
\end{equation*}
under the implicit condition that
\[
\sum_{i=1}^n x_i^{\gamma} > 0
\]
almost surely (which is the case here). One should also explicitly state that the function
\[
x \longmapsto x^{\gamma}
\]
is integrable on the interval \((0,1)\), which is guaranteed as soon as \(\gamma > -1\).

\end{theorem}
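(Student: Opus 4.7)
The plan is to follow the template of Section~\ref{sec2} and Section~\ref{subsec72}: rewrite $K_n(\beta,\gamma)$ as an expectation over i.i.d.\ uniform variables, apply the strong law of large numbers to identify the almost-sure limit of the integrand, and then justify interchanging the limit and the expectation.

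First, I would let $X_1,\dots,X_n$ be i.i.d.\ $\mathcal{U}(0,1)$ and introduce the empirical means
\[
Y_n \;=\; \frac{1}{n}\sum_{i=1}^n X_i^\beta, \qquad Z_n \;=\; \frac{1}{n}\sum_{i=1}^n X_i^\gamma,
\]
so that $K_n(\beta,\gamma) = \mathbb{E}[Y_n/Z_n]$. The hypothesis $\beta,\gamma>-1$ makes $\mathbb{E}[X^\beta]=1/(\beta+1)$ and $\mathbb{E}[X^\gamma]=1/(\gamma+1)$ finite, so Kolmogorov's strong law of large numbers yields $Y_n \to 1/(\beta+1)$ and $Z_n \to 1/(\gamma+1)$ almost surely. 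Since the latter limit is strictly positive, the ratio converges almost surely to the target value $(\gamma+1)/(\beta+1)$.

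Second, I would address the passage to the limit inside the expectation. Unlike the integrals handled in Sections \ref{sec2} and \ref{subsec72}, the integrand here is not uniformly bounded pointwise as soon as $\beta<0$ or $\gamma<0$, so a direct dominated-convergence argument no longer applies verbatim. The natural fix is a truncation/concentration split: on the ``typical'' event $G_n = \{ Z_n \ge 1/(2(\gamma+1)) \}$, one has $Y_n/Z_n \le 2(\gamma+1)\,Y_n$, whose expectation equals the fixed constant $2(\gamma+1)/(\beta+1)$; on the complementary event $G_n^c$, a Chebyshev-type estimate applied to a slowly growing truncation of $X_i^\gamma$ will show that $\mathbb{P}(G_n^c)$ decays fast enough to kill any polynomial growth of the ratio. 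Combining almost-sure convergence with the uniform integrability obtained this way gives the stated limit.

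The main obstacle is precisely this uniform-integrability step when $\beta$ or $\gamma$ is negative: the integrand may blow up near the faces of the cube, and one has to rule out contributions from the rare event that the empirical mean $Z_n$ is far below its almost-sure limit. In the sub-regime $\beta,\gamma\ge 0$, every $X_i^\beta$ and $X_i^\gamma$ lies in $(0,1]$ and only lower-tail control of $Z_n$ is required, so the argument collapses to a direct dominated-convergence application in the spirit of the integral limit theorem of Section~\ref{sec2}. Finally, the integrability remark $\gamma>-1$ in the statement is exactly what guarantees $\mathbb{E}[X^\gamma]<\infty$ and the positivity of the limit of $Z_n$, so that division is legitimate in the limit.
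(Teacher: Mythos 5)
Your skeleton (rewrite as $\mathbb{E}[Y_n/Z_n]$, apply the strong law, then justify interchanging limit and expectation) is the same as the paper's, but the paper's proof stops at ``the dominated convergence theorem gives the stated limit'' without exhibiting a dominating function, whereas you correctly flag that none exists pointwise in general and propose a uniform-integrability repair. That extra step is genuinely needed, so your version is the more honest one. Two refinements, though. First, your dichotomy is slightly off: by the mediant inequality $\frac{\sum a_i}{\sum b_i}\le\max_i\frac{a_i}{b_i}$ for positive terms, one has
\[
\frac{\sum_i x_i^{\beta}}{\sum_i x_i^{\gamma}}\;\le\;\max_i x_i^{\beta-\gamma}\;\le\;1
\quad\text{whenever }\beta\ge\gamma ,
\]
so dominated convergence applies verbatim in that whole regime regardless of signs (e.g.\ $\beta=-\tfrac12$, $\gamma=-\tfrac9{10}$ is fine), while the problematic case is exactly $\beta<\gamma$ --- which includes pairs with $\beta,\gamma\ge 0$ such as $\beta=0$, $\gamma=1$, where the integrand $n/\sum x_i$ is unbounded; your claim that the nonnegative sub-regime ``collapses to a direct dominated-convergence application'' is therefore not quite right. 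Second, your treatment of the bad event $G_n^c=\{Z_n<\tfrac{1}{2(\gamma+1)}\}$ is optimistic: on $G_n^c$ the ratio is not polynomially bounded in $n$ (for $\gamma>0>\beta$ it blows up as all $x_i\to 0$), so killing its contribution requires a quantitative moment bound, e.g.\ $(\tfrac1n\sum X_i^{\beta})^{p}\le\tfrac1n\sum X_i^{p\beta}$ for $1<p<\min(2,1/|\beta|)$ giving $\sup_n\mathbb{E}[Y_n^{p}]<\infty$, together with $\sup_{n\ge n_0}\mathbb{E}[Z_n^{-q}]<\infty$ (via $Z_n\ge\tfrac1n(\max_iX_i)^{\gamma}$ or a binomial count of the $X_i\ge\tfrac12$), and then H\"older against the exponentially small $\mathbb{P}(G_n^c)$. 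With those two points fixed, your argument closes the gap that the paper's one-line proof leaves open; on the good event your domination $Y_n/Z_n\le 2(\gamma+1)Y_n$ with $\{Y_n\}$ uniformly integrable (by Scheff\'e, since $Y_n\to\tfrac1{\beta+1}$ a.s.\ with constant mean) is exactly right.
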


\begin{proof}
Let $M_n(\beta) = \frac{1}{n} \sum X_i^\beta$ and $M_n(\gamma) = \frac{1}{n} \sum X_i^\gamma$. By the strong law of large numbers, one has
\[
M_n(\beta) \to \frac{1}{\beta+1}, \quad M_n(\gamma) \to \frac{1}{\gamma+1},
\]
and the dominated convergence theorem gives the stated limit.
\end{proof}

\subsection{Extension to arbitrary probability densities}\label{subsec75}

Let $X_1, \dots, X_n$ be independent and identically distributed random variables\ with density $\rho(x)$ on a domain $\Omega$, and consider
\[
L_n[f,g] = \int_{\Omega^n} \frac{\sum_{i=1}^n f(x_i)}{\sum_{i=1}^n g(x_i)} \prod_{i=1}^n \rho(x_i) dx_i.
\] 
Let us set, as expectations, 
$$
Y_n = \frac{1}{n} \sum f(X_i),
$$
and
$$
Z_n = \frac{1}{n} \sum g(X_i).
$$
Then we have $L_n[f,g] = \mathbb{E}[Y_n/Z_n]$. By applying the law of large numbers, we obtain that, almost surely, 
$$
Y_n \to \mathbb{E}[f(X)], \qquad Z_n \to \mathbb{E}[g(X)].
$$
If $f/g$ is bounded on $\Omega$, the dominated convergence theoreme enables is to write
\[
\lim_{n \to \infty} L_n[f,g] = \frac{\mathbb{E}[f(X)]}{\mathbb{E}[g(X)]}.
\]
This generalization shows that the probabilistic interpretation of Newton-Girard constants extends beyond the uniform distribution.

\subsection{Spectral interpretation via Lagrange-Sylvester}\label{subsec76}

Finally, consider a diagonalizable matrix $A$ with distinct eigenvalues $\lambda_k$ and an analytic function $f$. In that case, the Schwerdtfeger formula reduces to the Lagrange-Sylvester interpolation formula
\[
f(A)=\sum_{k=1}^{\mu} f(\lambda_k) L_k(A),
\]
where
\[
L_k(A)=\prod_{j\neq k}\frac{A-\lambda_j I}{\lambda_k-\lambda_j}
\]
are the Lagrange interpolation polynomials. High-dimensional symmetric polynomials can be interpreted as spectral moments of a diagonal matrix of uniform random variables. In this viewpoint, Newton-Girard identities provide constraints on the moments, and the constants we computed appear as stable ratios of spectral moments in the large-$n$ limit. By carefully analyzing these generalized integrals and linking them to the law of large numbers, dominated convergence theorem, and spectral interpretations, one obtains a coherent analytic foundation for the constants appearing in Newton-Girard identities. This also clarifies why factorials and ratios like $2/(j+1)$ naturally emerge in the normalized formulas.

\section{Conclusion}\label{sec8}

We have shown that Newton's identities admit a natural probabilistic and integral interpretation. Modeling the variables as independent uniforms on $(0,1)$, the constants in the normalized identities emerge as limits of high-dimensional integrals and ratios of moments. From this perspective, Newton's identities express a compatibility between the combinatorial structure of elementary symmetric polynomials and the moment structure of a probability distribution. The approach can be extended to other distributions or random matrix ensembles, leading to new constants and generalized normalized Newton-type identities. This method allows one to approximate symmetric polynomials of very high degree in large datasets where exact computation is $O(n^k)$, whereas the probabilistic limit is $O(1)$. The probabilistic approximation of high-dimensional symmetric polynomials may offer a strong argument for the usefulness of this method.

\appendix

\section{Appendix: a more detailed proof of Theorem 2.1}\label{appA}

Let us consider a sequence $(X_n)$ of independent random variables, with values in $(0,1)$, with a uniform probability law on $(0,1)$ and $f$ an integrable function on $(0,1)$. 
If $X$ satisfies a uniform probability law on $(0,1)$, we have, for the expected value
\begin{equation*}
    \mathbb{E}(f(X))=\int_{\mathbb{R}}f(x)\chi_{(0,1)}(x)\,\mathrm{d}x=\int_{(0,1)}f(x)\,\mathrm{d}x,
\end{equation*}
where $\chi$ is the indicator function. The random variables $(f(X_n))$ make a sequence of random independent and integrable variables; according to the strong law of large numbers, we have
\begin{equation*}
    \frac{f(X_1)+\cdots +f(X_n)}{n}\rightarrow\int_{(0,1)}f(x)\,\mathrm{d}x
\end{equation*}
almost surely. We thus have, taking $f(x)=x^{\alpha}$:
\begin{equation*}
    \displaystyle\frac{f(X_1)+\cdots +f(X_n)}{n}\rightarrow \mathbb{E}\left(X_i^{\alpha}\right)=\int_{(0,1)}x^{\alpha}\,\mathrm{d}x=\frac{1}{\alpha+1}.
\end{equation*}

Thus, according to the strong law of large numbers, we have that
\begin{equation*}
    \displaystyle\frac{X_1^{\alpha}+X_2^{\alpha}+\cdots+X_n^{\alpha}}{X_1+X_2+\cdots+X_n}=\displaystyle\frac{\displaystyle\frac{X_1^{\alpha}+X_2^{\alpha}+\cdots+X_n^{\alpha}}{n}}{\displaystyle\frac{X_1+X_2+\cdots+X_n}{n}}\rightarrow\displaystyle\frac{\displaystyle\frac{1}{\alpha+1}}{\displaystyle\frac{1}{2}}    
\end{equation*}
almost surely.

Moreover, since the $X_i$ take values in $(0,1)$, and since $\alpha>1$, one has
\begin{equation*}
    0<\frac{X_1^{\alpha}+X_2^{\alpha}+\cdots+X_n^{\alpha}}{X_1+X_2+\cdots+X_n}<1.
\end{equation*}
According to the dominated convergence theorem, we have
\begin{equation*}
    \int_{\Omega}\frac{X_1^{\alpha}+X_2^{\alpha}+\cdots+X_n^{\alpha}}{X_1+X_2+\cdots+X_n}\,\mathrm{d}P\rightarrow \int_{\Omega}\frac{2}{\alpha+1}\,\mathrm{d}P=\frac{2}{\alpha+1}.
\end{equation*}
To conclude, let us apply the transfer theorem
\begin{equation*}
    \int_{\Omega}\frac{X_1^{\alpha}+X_2^{\alpha}+\cdots+X_n^{\alpha}}{X_1+X_2+\cdots+X_n}\,\mathrm{d}P=\int_{(0,1)^n}\frac{x_1^{\alpha}+x_2^{\alpha}+\cdots+x_n^{\alpha}}{x_1+x_2+\cdots+x_n}\,\mathrm{d}x_1\mathrm{d}x_2\cdots\mathrm{d}x_n.
\end{equation*}
and thus
\begin{equation*}
    \int_{(0,1)^n}\frac{x_1^{\alpha}+x_2^{\alpha}+\cdots+x_n^{\alpha}}{x_1+x_2+\cdots+x_n}\,\mathrm{d}x_1\mathrm{d}x_2\cdots\mathrm{d}x_n\rightarrow\frac{2}{\alpha+1},
\end{equation*}
yielding finally
\begin{equation*}
    \int_{(0,1)^n}\frac{S_{\alpha}^{(n)}}{S_1^{(n)}}\,\mathrm{d}x_1\mathrm{d}x_2\cdots\mathrm{d}x_n\rightarrow\frac{2}{\alpha+1}.
\end{equation*}

\section{Appendix: on the Schwerdtfeger formula}\label{appB}

Let \(A\in M_n(\mathbb{C})\) and let \(\lambda_1,\dots,\lambda_\mu\) be the distinct eigenvalues of \(A\). For each \(k\in\{1,\dots,\mu\}\), denote by \(m_k\) the multiplicity of \(\lambda_k\) in the minimal polynomial of \(A\). We introduce the Frobenius covariants
\[
F_k(A)=\prod_{\substack{j=1\\ j\neq k}}^{\mu}
\frac{A-\lambda_j I}{\lambda_k-\lambda_j}.
\]
When \(A\) is diagonalizable, the matrices \(F_k(A)\) coincide with the spectral projectors onto the eigenspaces associated with \(\lambda_k\). They satisfy the classical relations
\[
F_k(A)^2=F_k(A),\qquad
F_k(A)F_\ell(A)=0\quad(k\neq \ell),\qquad
\sum_{k=1}^{\mu}F_k(A)=I.
\]
In the general (non diagonalizable) case, the \(F_k(A)\) are no longer projectors, but they still form a system of mutually commuting matrices which isolates the contributions of each eigenvalue in functional representations of \(A\).

Let \(f\) be an analytic function on an open set containing the spectrum \(\sigma(A)\). The Schwerdtfeger formula states that
\begin{equation}\label{eq:Schwerdtfeger-Appendix}
f(A)
=
\sum_{k=1}^{\mu}
\sum_{j=0}^{m_k-1}
\frac{f^{(j)}(\lambda_k)}{j!}
\,(A-\lambda_k I)^j\,F_k(A).
\end{equation}
This expression provides a complete spectral expansion of \(f(A)\), taking into account both the eigenvalues and the possible nontrivial
Jordan blocks of \(A\).

In the special case where \(A\) is diagonalizable, one has \(m_k=1\) for all \(k\), and the formula reduces to
\[
f(A)=\sum_{k=1}^{\mu} f(\lambda_k)\,F_k(A),
\]
which is exactly the Lagrange–Sylvester interpolation formula (see section \ref{subsec76}). Hence, Schwerdtfeger’s representation can be viewed as the natural extension of spectral interpolation to matrices with nontrivial Jordan structure.

From a conceptual viewpoint, the Frobenius covariants \(F_k(A)\) play the role of spectral selectors, while the powers \((A-\lambda_k I)^j\)
encode the nilpotent contributions arising from the Jordan blocks. Formula \eqref{eq:Schwerdtfeger-Appendix} therefore provides a unified and
intrinsic description of matrix functions, valid in full generality.

This framework underlies many classical constructions in matrix theory, and in particular it clarifies the structural meaning of algorithms for computing polynomial functions of matrices and characteristic polynomials, as discussed in Appendix~\ref{appC}.

\section{Appendix: the Le Verrier-Souriau-Faddeev algorithm}\label{appC}

This appendix presents an algorithmic counterpart to the spectral framework developed in Appendix~\ref{appB}. While the Schwerdtfeger formula describes the general structure of matrix functions in terms of eigenvalues and Frobenius covariants, the Le Verrier-Souriau-Faddeev algorithm \cite{LeVerrier1840,Souriau1948,Faddeev1963,Householder1964,Faddeev1972,chender} provides a concrete and efficient procedure for computing the characteristic polynomial of a matrix using traces of its powers. This algorithm allows for the recursive computation of the characteristic polynomial from the traces of powers. 

\begin{lemma}
    
Let us set $A\in M_n(\mathbb{C})$. The matrix sequence defined by $A_0=A$ and
\begin{equation}
    A_k=A^{k+1}-\sum_{i=0}^{k-1}\frac{\mathrm{Tr}(A_i)}{i+1}A^{k-i}
\end{equation}
gives
\begin{equation*}
    \mathrm{Tr}(A_k)=(-1)^k(k+1)\sigma_{k+1}.
\end{equation*}

\end{lemma}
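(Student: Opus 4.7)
The plan is to prove the claim by induction on $k$, with the trace of the defining recursion serving as the computational engine and the Newton-Girard identity from Section \ref{sec6} as the algebraic identity that closes the loop. First, I would take the trace of the defining relation to obtain
\[
\mathrm{Tr}(A_k) = \mathrm{Tr}(A^{k+1}) - \sum_{i=0}^{k-1} \frac{\mathrm{Tr}(A_i)}{i+1}\,\mathrm{Tr}(A^{k-i}),
\]
setting $S_m = \mathrm{Tr}(A^m)$ for brevity. This converts a matrix identity into a scalar recurrence relating the unknowns $\mathrm{Tr}(A_i)$ to the power sums $S_m$.

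The base case $k=0$ is immediate: $A_0 = A$, so $\mathrm{Tr}(A_0) = S_1 = \sigma_1$, which matches $(-1)^0(0+1)\sigma_1$. For the inductive step, I would assume $\mathrm{Tr}(A_i) = (-1)^i(i+1)\sigma_{i+1}$ for all $0 \le i < k$, substitute this into the trace recurrence to cancel the factors $i+1$, and then re-index the sum via $j = i+1$. This transforms the right-hand side into
\[
S_{k+1} - \sum_{j=1}^{k} (-1)^{j-1} \sigma_j S_{k+1-j}.
\]
At this point, the Newton-Girard identity at level $m=k+1$, written out earlier in Section \ref{sec6}, reads exactly $S_{k+1} = \sum_{j=1}^{k}(-1)^{j-1}\sigma_j S_{k+1-j} + (-1)^k(k+1)\sigma_{k+1}$, so the difference collapses to $(-1)^k(k+1)\sigma_{k+1}$, closing the induction.

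There is essentially no hard analytic content; the difficulty is purely one of bookkeeping, and the main obstacle is keeping the two competing sign and index conventions aligned, namely the $i$-indexing used by the Faddeev recursion (which starts at $i=0$ and carries a factor $1/(i+1)$) versus the $j$-indexing of the Newton-Girard identity (which starts at $j=1$ and carries a factor $(-1)^{j-1}\sigma_j$). A small side point worth noting is the range of validity: the Newton-Girard identity in the form used here holds for $1 \le k+1 \le n$, which covers all the steps the algorithm actually performs; beyond this range one would invoke the convention $\sigma_j = 0$ for $j > n$, but the lemma as stated does not require it.
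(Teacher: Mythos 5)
Your proof is correct and follows essentially the same route as the paper's: take the trace of the defining recursion, apply strong induction with the base case $k=0$, re-index, and close the loop with the Newton--Girard identity at level $k+1$. The only cosmetic difference is that the paper phrases the step as passing from $k$ to $k+1$ (first absorbing the last term of the sum), whereas you establish the claim at $k$ directly from the hypothesis at all $i<k$; your remark on the range $1\le k+1\le n$ is a welcome precision the paper leaves implicit.
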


\begin{proof}

The result can be proven by induction. For $k=0$, it is obviously true. Let us assume that the result is true for $k\in[0,n-2]$. Using the recurrence hypothesis, we have
\begin{align*}
    A_{k+1}&=A^{k+2}-\sum_{i=0}^{k-1}\frac{\mathrm{Tr}(A_i)}{i+1}A^{k+1-i}-\frac{\mathrm{Tr}(A_k)}{k+1}A\nonumber\\
    &=A^{k+2}-\sum_{i=0}^k\frac{\mathrm{Tr}(A_i)}{i+1}A^{k+1-i}.
\end{align*}

We have also
\begin{align*}
\mathrm{Tr}(A_{k+1})&=\mathrm{Tr}(A^{k+2})-\sum_{i=0}^k\frac{\mathrm{Tr}(A_i)\mathrm{Tr}(A^{k+1-i})}{i+1}\nonumber\\
&\underbrace{=}_{\mathrm{Recurrence~Hypothesis}}S_{k+2}-\sum_{i=0}^k(-1)^i\sigma_{i+1}S_{k+1-i}\nonumber\\
&=S_{k+2}-\sigma_1S_{k+1}+\cdots+(-1)^{k+1}\sigma_{k+1}S_1\nonumber\\
&\underbrace{=}_{\mathrm{Newton-Girard}}(-1)^{k+1}(k+2)\sigma_{k+2}.
\end{align*}
  
\end{proof}

\begin{theorem}

Let us set $A\in M_n(\mathbb{C})$ and the matrix sequence defined by $A_0=A$ and $\forall k\in\mathbb{N}$:
\begin{equation*}
    A_{k+1}=A\left(A_k-\frac{\mathrm{Tr}(A_k)}{k+1}\,I_n\right).
\end{equation*}
The characteristic polynomial then reads
\begin{equation*}
    \chi(A)=(-1)^n\left(X^n-\sum_{k=1}^n\frac{\mathrm{Tr}(A_{k-1})}{k}X^{n-k}\right).
\end{equation*}

\end{theorem}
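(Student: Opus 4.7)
The plan is to reduce the theorem to the preceding Lemma, whose conclusion is $\mathrm{Tr}(A_{k-1}) = (-1)^{k-1} k \sigma_k$. That Lemma defines the auxiliary sequence $(A_k)$ by an \emph{explicit} expansion in powers of $A$, while the theorem uses a compact \emph{recursion}, so the first task is to verify that both definitions produce the same matrices.

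For this, I would argue by induction on $k$. The base case $A_0 = A$ is immediate in both definitions. For the inductive step, assume the Lemma's explicit formula holds at level $k$. Multiplying it by $A$ on the left and then subtracting $\frac{\mathrm{Tr}(A_k)}{k+1}\,A$ yields
\[
A\!\left(A_k - \frac{\mathrm{Tr}(A_k)}{k+1}\,I_n\right) = A^{k+2} - \sum_{i=0}^{k-1} \frac{\mathrm{Tr}(A_i)}{i+1}\,A^{k+1-i} - \frac{\mathrm{Tr}(A_k)}{k+1}\,A,
\]
and the three terms on the right combine into $A^{k+2} - \sum_{i=0}^{k} \frac{\mathrm{Tr}(A_i)}{i+1}\,A^{k+1-i}$, which is precisely the Lemma's explicit formula at level $k+1$. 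Hence the theorem's recursion and the Lemma's explicit definition produce the same sequence of matrices.

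Once the two definitions are identified, the Lemma gives $\frac{\mathrm{Tr}(A_{k-1})}{k} = (-1)^{k-1} \sigma_k$ for each $k \in \{1, \ldots, n\}$. Substituting into the right-hand side of the claimed identity,
\[
X^n - \sum_{k=1}^n \frac{\mathrm{Tr}(A_{k-1})}{k}\,X^{n-k} = X^n - \sigma_1 X^{n-1} + \sigma_2 X^{n-2} - \cdots + (-1)^n \sigma_n,
\]
which is exactly the characteristic polynomial $\chi_A(X) = \det(XI - A)$ as fixed in Section \ref{sec6}.

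The only point requiring a brief comment is the $(-1)^n$ factor in the statement: under the convention of Section \ref{sec6} it is spurious, while under the alternate convention $\det(A - XI) = (-1)^n \det(XI - A)$ it is correct. I would therefore open the proof with a one-line clarification of the sign convention in force, after which the computation above finishes the argument. I do not expect a genuine obstacle here: the real content is concentrated in the preceding Lemma (which carries the inductive use of the Newton–Girard identity), and the present theorem is essentially a cosmetic repackaging of that conclusion into a recursion that never requires explicitly forming the powers $A^{k+1}$.
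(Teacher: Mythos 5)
Your proof is correct and follows essentially the same route as the paper: invoke the preceding Lemma to obtain $\mathrm{Tr}(A_{k-1})/k = (-1)^{k-1}\sigma_k$ and substitute into the expansion $\sum_{k=0}^n(-1)^k\sigma_k X^{n-k}$ of the characteristic polynomial. Your two additional observations --- that the theorem's compact recursion and the Lemma's explicit power expansion must be checked to generate the same sequence $(A_k)$, and that the $(-1)^n$ prefactor is only consistent with the convention $\chi_A(X)=\det(A-XI)$ whereas Section~\ref{sec6} fixes $\det(XI-A)$ --- both address genuine gaps in the paper's own one-line proof, which passes silently over the first point and switches sign conventions mid-computation.
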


\begin{proof}
    
Let $\lambda_1, \lambda_2, \cdots, \lambda_n$ denote the eigenvalues of $A$, $\sigma_1, \sigma_2, \cdots, \sigma_n$ the elementary symmetric functions and $S_1, \cdots, S_n$ the associated Newton sums. The characteristic polynomial can be put in the form
\begin{equation}
    \chi_A(X)=\sum_{k=0}^n(-1)^k\sigma_kX^{n-k},
\end{equation}
which gives, using the Lemma B.1:
\begin{equation*}
    \chi_A(X)=(-1)^nX^n+\sum_{k=1}^n(-1)^{n-k}\sigma_kX^{n-k}=(-1)^n\left(X^n-\sum_{k=1}^n\frac{\mathrm{Tr}(A_{k-1})}{k}X^{n-k}\right).    
\end{equation*}

\end{proof}

From a spectral perspective, this procedure can be interpreted as a polynomial specialization of Schwerdtfeger’s framework. Indeed, when \(f(X)=\chi_A(X)\), one has \(f(A)=0\) by the Cayley–Hamilton theorem, and the recursive construction of the matrices \(B_k\) mirrors the expansion of polynomial functions of \(A\) in terms of powers of \(A\) and scalar coefficients. The Newton–Girard identities encode the passage from spectral data (power sums of eigenvalues) to the coefficients of the characteristic polynomial, while Schwerdtfeger’s representation explains why such a transition is always possible at the level of matrix functions.

Consequently, the Le Verrier-Souriau-Faddeev algorithm can be viewed as a computational realization of the spectral theory described in Appendix~\ref{appB}. It provides a bridge between abstract spectral decompositions and effective numerical or symbolic computation, showing how the general principles of matrix functions lead naturally to concrete algorithms for determining characteristic polynomials.

\subsection{Acknowledgments}

I would like to thank Jordan Stoyanov for useful comments.

\end{document}